\documentclass[12pt]{article}

\usepackage{amssymb}
\usepackage{amsmath}
\usepackage{amsthm}
\usepackage{mathtools,xcolor,enumerate}
\usepackage[colorlinks=true,allcolors=black]{hyperref}
\usepackage[expansion=false]{microtype}

\newtheorem{theorem}{Theorem}
\newtheorem{proposition}{Proposition}[section]
\newtheorem{lemma}[proposition]{Lemma}

\theoremstyle{definition}

\theoremstyle{remark}

\newcommand{\R}{\mathbb{R}}
\newcommand{\C}{\mathbf{C}}
\newcommand{\cH}{\mathcal{H}}
\newcommand{\cL}{\mathcal{L}}
\newcommand{\Per}{\operatorname{Per}}
\newcommand{\diver}{\operatorname{div}}

\newcommand{\dist}{\operatorname{dist}}

\begin{document}

\title{A quantitative inequality for
general area-minimizing hypercones}
\author{Gongping Niu}
\date{}
\maketitle

\begin{abstract}
Let $\mathbf{C}=\partial E\subset\mathbb{R}^{n+1}$ be an
area-minimizing hypercone. The cone may have nonisolated
singularities. We prove that there is a constant
$c_{\mathbf{C}}>0$ such that
\[
 \frac{\operatorname{Per}(F;B_R)-\operatorname{Per}(E;B_R)}{R^n}
 \geq c_{\mathbf{C}}
 \left(\frac{|F\triangle E|}{R^{n+1}}\right)^2
\]
for every $R>0$ and every set $F$ of locally finite perimeter
with $F\triangle E\Subset B_R$.
This extends the unweighted quantitative inequality from regular
area-minimizing hypercones to general area-minimizing hypercones.

We follow the calibration argument in the author's earlier work
on regular cones. The main change is the construction of the
vector fields. The pointwise asymptotic estimates for positive
Jacobi fields used in the regular case do not directly apply here.
We use the minimal foliations constructed by Zhihan Wang on the
two sides of the cone. On each leaf, we integrate projection
kernels with a positive Jacobi field as the weight.
The kernel construction gives the required divergence bound.
Wang's weak Harnack inequality and growth estimates give the
integral bounds needed to prove linear growth of the vector fields.

\end{abstract}

\begingroup
\footnotesize
\endgroup


\section{Introduction}

An area-minimizing cone has no more perimeter than any competitor with
the same boundary data. A quantitative inequality asks how much the
perimeter must increase when the competitor differs from the cone.
We prove that the perimeter deficit
controls the square of this volume for every area-minimizing
hypercone. The cone may have singularities away from the origin.

De Philippis and Maggi \cite{de2014sharp} proved such inequalities for
area-minimizing Lawson cones, apart from six cases. Liu
\cite{liu2019stability} treated the remaining cases. These proofs use
the explicit geometry of the Lawson cones. In \cite[Theorem~1]{niu2026strict},
we proved the unweighted inequality for every regular area-minimizing
hypercone. Here regular means that the cone is smooth away from the
origin. That result does not require strict stability or strict
minimality. In this paper, our Theorem~\ref{thm:main} removes the regularity assumption.

Singular minimizing hypercones therefore satisfy the same quadratic
unweighted estimate as hyperplanes (with a constant depending on the cone).
For smooth solutions of the Plateau problem, De Philippis and Maggi
\cite[Theorem~1]{de2014sharp} prove a local quadratic estimate under
uniqueness and strict stability assumptions. Our result suggests that the same local quadratic inequality should hold
for singular Plateau solutions satisfying these two conditions.

\medskip

For a nonflat minimizing hypercone $\C\subset\R^{n+1}$, define
\[
 \operatorname{spine}\C
 =\{v\in\R^{n+1}:\C+v=\C\}.
\]
This is a linear subspace. After a rotation, the cone splits as
\[
 \C=\C_0\times\R^k,\qquad
 k=\dim\operatorname{spine}\C,\qquad
 \operatorname{spine}\C_0=\{0\}.
\]
The factor $\C_0$ is again an area-minimizing hypercone (\cite[Lemma~35.5]{simon1983lectures}). 
However, $\C_0$ need not be regular away from the origin.
Thus $\C$ need not be cylindrical, i.e., a product of a regular
minimizing hypercone and a Euclidean space. 
For every even integer $l\geq10$, Cui's recent work
\cite[Theorem~6.2]{cui2026laplacian} gives an area-minimizing
hypercone $\mathbf K\subset\R^{2 l}$ with
\[
  \operatorname{Sing}\mathbf K
  =(\R^l\times\{0\})\cup(\{0\}\times\R^l).
\]
Any translation preserving $\mathbf K$ also preserves its singular
set. This union of two coordinate subspaces has no nonzero translation
symmetry, so $\operatorname{spine}\mathbf K=\{0\}$.
Since $\mathbf K$ has singular points away from the origin,
it cannot be written as a regular minimizing hypercone times
$\R^j$.
Theorem~\ref{thm:main} applies to such cones and requires
no regularity assumption on the factor $\C_0$.

\medskip

The proof uses a minimal foliation on each side of the cone.
Hardt and Simon \cite{hardt1985area} constructed these foliations for
regular minimizing cones. On each side, the smooth minimizing leaf
is unique up to dilation. Wang \cite[Theorem~1.1]{wang2024mean}
proved the existence of smooth minimizing leaves for general minimizing
cones. His theorem does not establish uniqueness up to dilation in this
setting. Our proof only needs existence. We choose any leaf supplied by
Wang's theorem on each side and use its dilations to define the foliation.
The unit normal field $X$ is
a calibration. The calibration identity expresses the perimeter deficit as the integral of
$|\nu_F-X|^2/2$ over the competitor's reduced boundary (see also \eqref{eq:deficit} below). To control
the volume, we seek a field $Y$ on each side with
\[
 Y\cdot X=0,\qquad \diver Y\geq1,\qquad |Y(x)|\leq C|x|.
\]
A flux estimate and the Cauchy--Schwarz inequality then give the theorem, as in
\cite{niu2026strict}.

Hence the main task is to construct $Y$ with the stated bounds on the whole side.
On one leaf $\Sigma$, the function $\psi(z)=|z\cdot\nu(z)|$ is a
positive Jacobi field. The construction in \cite{niu2026strict}
uses the smooth link and pointwise estimates for this field at
infinity. For a general cone, the link itself may be
singular. The rescaled leaves converge smoothly only away from this
singular set. This convergence does not give uniform pointwise
estimates over the whole leaf. Thus the estimates used in the regular
case do not directly give the required bound here.

Jacob Bernstein suggested projecting the position vector onto the
tangent spaces of the foliation leaves. We use this idea to form
projection kernels centered at points of a fixed leaf.
We integrate these kernels over the leaf (with the positive Jacobi
field $\psi$ as the weight). After normalization, we extend the
resulting tangent field by dilation.

\medskip

In Section~\ref{sec:main} below we prove the inequality under the assumption of the existence of the desired tangent fields, so the arguments exactly follow the proof in \cite[section 3.1]{niu2026strict}. We will leave all the technical construction of the vector field $Y_\pm$ to Section~\ref{sec:construction} and show that the resulting field has the required properties.

\section{The main theorem}\label{sec:main}

Let $E\subset\R^{n+1}$ be an open cone. Assume that its boundary
$\C=\partial E$ is area minimizing.   
Set
\[
 U_-=E,\qquad U_+=\R^{n+1}\setminus\overline E,\qquad
 D_R(F)=\Per(F;B_R)-\Per(E;B_R).
\]
We orient $\C$ by the measure-theoretic outer normal $\nu_E$.

\begin{theorem}\label{thm:main}
There is a constant $c_\C>0$ with the following property.
Let $R>0$. Let $F\subset\R^{n+1}$ have locally finite perimeter and
satisfy $F\Delta E\Subset B_R$. Then
\begin{equation}\label{eq:main}
 \frac{D_R(F)}{R^n}
 \geq c_\C\left(\frac{\cL^{n+1}(F\triangle E)}{R^{n+1}}\right)^2.
\end{equation}
\end{theorem}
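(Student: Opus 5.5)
By scaling (both sides of \eqref{eq:main} are invariant under $x\mapsto x/R$, since $E$ is a cone), we may take $R=1$. The plan follows the calibration argument of \cite[Section~3.1]{niu2026strict}. We assume the existence of the calibrating foliations and tangent fields that Section~\ref{sec:construction} will supply. On each side $U_\pm$ we fix a smooth minimizing leaf $\Sigma_\pm$ given by Wang \cite[Theorem~1.1]{wang2024mean}. Its dilations $\lambda\Sigma_\pm$, $\lambda>0$, foliate $U_\pm$. The unit normal field of the leaves, oriented consistently with $\nu_E$, then defines a $0$-homogeneous field $X$ on $U_+\cup U_-$ with $|X|=1$ and $\diver X=0$. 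We also take fields $Y_\pm$ on $U_\pm$ with
\[
 Y_\pm\cdot X=0,\qquad \diver Y_\pm\ge 1,\qquad |Y_\pm(x)|\le C|x|.
\]

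\textbf{Step 1 (deficit identity).} First extend $X$ by $\nu_E$ on $\C$ in the measure-theoretic sense. Applying the divergence theorem to $X$ on $F\setminus E$ and on $E\setminus F$ (both compactly contained in $B_1$) should give
\begin{equation}\label{eq:deficit}
 D_1(F)=\frac12\int_{\partial^*F\cap B_1}|\nu_F-X|^2\,d\cH^n .
\end{equation}
The only delicacy is the singular set of $\C$ and the fact that $X$ is discontinuous across $\C$. We handle this by approximating. Since $\cH^{n-2}(\operatorname{Sing}\C)$ is locally finite and $X$ is bounded, we can cut off near $\operatorname{Sing}\C$ and near the origin at negligible cost, and apply the divergence theorem on the open sets $U_\pm$ separately. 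The boundary terms on $\C$ compare $\nu_E$ with $\nu_F$ on $\partial^*F\cap\C$, exactly as in the regular case.

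\textbf{Step 2 (volume via flux).} Next apply the divergence theorem to $Y_+$ on $F\cap U_+$ and to $-Y_-$-type terms on $E\setminus F\subset U_-$, again with cutoffs near $\operatorname{Sing}\C\cup\{0\}$. The linear bound $|Y_\pm|\le C|x|$ and the $\cH^{n-2}$-smallness of $\operatorname{Sing}\C$ make the cutoff errors vanish. There is no boundary contribution on $\C$, because $Y_\pm\cdot\nu_E=Y_\pm\cdot X=0$ there; this also needs the linear bound near the cone. We obtain
\[
 \cL^{n+1}(F\triangle E)\le\int_{F\triangle E}\diver Y
 =\int_{\partial^*F\cap B_1\setminus\C}\pm\,Y\cdot\nu_F\,d\cH^n .
\]
Since $Y\cdot X=0$, we may write $Y\cdot\nu_F=Y\cdot(\nu_F-X)$. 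Using $|Y|\le C$ on $B_1$ and Cauchy--Schwarz,
\[
 \cL^{n+1}(F\triangle E)\le C\,\Per(F;B_1)^{1/2}\Bigl(\int_{\partial^*F\cap B_1}|\nu_F-X|^2\Bigr)^{1/2}\le C\,\Per(F;B_1)^{1/2}D_1(F)^{1/2}.
\]

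\textbf{Step 3 (conclusion).} If $D_1(F)\ge\Per(E;B_1)$, the inequality holds trivially, because $\cL^{n+1}(F\triangle E)\le|B_1|$. Otherwise $\Per(F;B_1)\le 2\Per(E;B_1)\le C$, and Step~2 gives $D_1(F)\ge c_\C\,\cL^{n+1}(F\triangle E)^2$.

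The main obstacle is the existence of $Y_\pm$ with $\diver Y_\pm\ge1$ and linear growth, which is deferred to Section~\ref{sec:construction}. For that we would integrate projection kernels over a leaf, weighted by the Jacobi field $\psi$, and use Wang's Harnack and growth estimates. Within this proof, the only nonroutine point is justifying the divergence theorem across the singular set, and that is handled by capacity-type cutoffs.
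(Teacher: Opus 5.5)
Your proposal is correct and follows the paper's own reduction: the calibration by the normals of Wang's leaves, the identity $D_1(F)=\frac12\int_{\partial^*F\cap B_1}|\nu_F-X|^2\,d\cH^n$, the flux bound for leaf-tangent fields $Y_\pm$ with $\diver Y_\pm\ge1$, then Cauchy--Schwarz and a two-case split. Two small corrections. First, $X$ is in fact continuous across $\operatorname{Reg}\C$, since both one-sided limits equal $\nu_E$, and Step~1 depends on exactly this. Second, in Step~2 the field $Y_\pm$ lives only on the open side and has no trace on $\C$, so the statement ``$Y_\pm\cdot\nu_E=0$ there'' needs justification. The paper's device supplies it: apply Gauss--Green on $A\cap\{\lambda>s\}$, whose new boundary is the leaf $s\Sigma$ where the $Y$-flux vanishes exactly, then let $s\downarrow0$ using only that $|Y_\pm|$ is bounded on $B_1$. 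This also makes the singular-set cutoffs unnecessary in that step.
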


Theorem~\ref{thm:main} does not assume strict stability, strict minimality, or a smooth link. Hence it extends \cite[Theorem~1]{niu2026strict} from
regular minimizing hypercones to general minimizing hypercones.
In this section, we prove the inequality from the properties of two
vector fields. We construct these fields in
Section~\ref{sec:construction}.

\bigskip

Next we will introduce the idea of the proof. The strategy is exactly similar to \cite[Theorem 1]{niu2026strict}. We construct a calibration $X$ and a linear growth vector fields $Y_\pm$ which is orthogonal to $X$. Then the same argument as \cite[Proof of Theorem 1]{niu2026strict} gives the \eqref{eq:main}. 

By the Wang's theorem \cite[Theorem~1.1]{wang2024mean}, there is a smooth properly
embedded minimizing leaf $\Sigma_\pm$ on each side $U_\pm$ with $\dist(0,\Sigma_\pm)=1$ (different from \cite{hardt1985area}, we only know the existence, while the uniqueness is unknown, so we can just choose one such foliation).
Their dilations foliate $U_\pm$.
The map $\Phi_\pm(t,z)=tz$ is a diffeomorphism from
$(0,\infty)\times\Sigma_\pm$ onto $U_\pm$.
Choose their normals so that
$\pm z\cdot\nu_\pm(z)>0$. Define
\begin{equation}\label{eq:X}
     X(tz)=\nu_\pm(z),\qquad
 \lambda_\pm(tz)=t,\qquad
 \psi_\pm(z)=|z\cdot\nu_\pm(z)|>0.
\end{equation}

As $t\downarrow0$, the leaves $t\Sigma_\pm$ converge to $\C$ with
multiplicity one. The convergence is smooth on compact subsets of
$\operatorname{Reg}\C$.
Set $X=\nu_E$ on $\operatorname{Reg}\C$.
As in \cite[Lemma 4.1]{niu2026strict}, we claim that 
$$X\in C^0(\R^{n+1}\setminus\operatorname{Sing}\C; \mathbb{S}^n), \qquad  \qquad \diver X=0 \qquad \text{in} \quad  \mathcal D'(\R^{n+1}).$$

$X$ is smooth on $\R^{n+1}\setminus \C$. At a regular point on $\C$, the leaves converge smoothly, hence their normals converge  to $\nu_E$.
Thus $X$ is a continuous unit field on
$\R^{n+1}\setminus\operatorname{Sing}\C$.

The leaves are minimal and $|X|=1$, so $\diver X=0$ on each side.
The normal traces agree across $\operatorname{Reg}\C$.
Thus $\diver X=0$ also holds in the distributional sense outside
$\operatorname{Sing}\C$.
Fix a compact set. Since $\dim_{\cH}\operatorname{Sing}\C\leq n-7$,
we can choose cutoffs $\eta_j$ that vanish near the singular set and
tend to one away from it. They satisfy
$\int|\nabla\eta_j|\,dx\to0$.
Then by a standard capacity argument, 
\begin{equation}\label{eq:calibration}
 \diver X=0\quad\text{in }\mathcal D'(\R^{n+1}).
\end{equation}
Mollify $X$ and integrate against $D\chi_F-D\chi_E$.
This measure has compact support in $B_R$.
The field $X$ is continuous outside $\operatorname{Sing}\C$, and
$\cH^n(\operatorname{Sing}\C)=0$.
We can therefore pass to the limit by dominated convergence.
Since $X=\nu_E$ on $\partial^*E$, as in \cite[Lemma 4.3]{niu2026strict} (or \cite[Proposition 4.1]{de2014sharp}), we obtain
\begin{equation}\label{eq:deficit}
 D_R(F)=\int_{\partial^*F\cap B_R}(1-X\cdot\nu_F)\,d\cH^n
       =\frac12\int_{\partial^*F\cap B_R}|\nu_F-X|^2\,d\cH^n.
\end{equation}

\bigskip

Now we suppose that there are smooth fields $Y_\pm$ on $U_\pm$ satisfy
\begin{equation}\label{eq:field-assumptions}
 Y_\pm\cdot X=0,\qquad \diver Y_\pm\geq1,\qquad
 |Y_\pm(x)|\leq K_\C|x|\quad\text{on }U_\pm.
\end{equation}
Fix one side $U$. Write $Y=Y_\pm$ and $\lambda=\lambda_\pm$. Let
$A\subset U$ have finite perimeter with $A\Subset B_R$.
For almost every $s>0$, apply Gauss--Green to
$A_s=A\cap\{\lambda>s\}$.
The support of $A_s$ is compact in $U$.
The new boundary lies on $\{\lambda=s\}=s\Sigma$.
Its normal is parallel to $X$, so the $Y$-flux across it is zero.
Therefore
\[
 \cL^{n+1}(A_s)\leq\int_{A_s}\diver Y\,dx
 =\int_{\partial^*A\cap\{\lambda>s\}}Y\cdot\nu_A\,d\cH^n.
\]
Let $s\downarrow0$, the volumes converge by monotone convergence.
The boundary integrals converge by dominated convergence: $Y$ is
bounded on $B_R$, and $A$ has finite perimeter. We obtain
\begin{equation}\label{eq:volume-flux}
 \cL^{n+1}(A)\leq\int_{\partial^*A\cap U}Y\cdot\nu_A\,d\cH^n.
\end{equation}
This argument uses $Y$ only inside $U$.

\begin{proof}[Proof of Theorem~\ref{thm:main} from \eqref{eq:field-assumptions}]
Because the inequality \eqref{eq:main} is scaling invariant, we take $R=1$. Denote
$A_+=F\cap U_+$, and $A_-=F^c\cap U_-$.
Both $A_\pm$ have finite perimeter and compact support in $B_1$.
The outer normal of $A_+$ is $\nu_F$ inside $U_+$.
The outer normal of $A_-$ is $-\nu_F$ inside $U_-$.
Apply \eqref{eq:volume-flux} to each set and use $Y_\pm\cdot X=0$.
Then
\begin{align*}
 \cL^{n+1}(F\triangle E) &\leq\int_{\partial^*F\cap U_+}Y_+\cdot(\nu_F-X)\,d\cH^n
       -\int_{\partial^*F\cap U_-}Y_-\cdot(\nu_F-X)\,d\cH^n\\
  &\leq K_\C\int_{\partial^*F\cap B_1}|\nu_F-X|\,d\cH^n\\
  &\leq K_\C\bigl(2\Per(F;B_1)D_1(F)\bigr)^{1/2}.
\end{align*}
The last inequality uses Cauchy--Schwarz and \eqref{eq:deficit}.

If $D_1(F)\leq1$, then $[\cL^{n+1}(F\triangle E)]^2\leq2K_\C^2(\Per(E;B_1)+1)D_1(F)$.
If $D_1(F)>1$, then $[\cL^{n+1}(F\triangle E)]^2\leq\cL^{n+1}(B_1)^2D_1(F)$.
Thus \eqref{eq:main} is proved.
\end{proof}
Hence, the only technical work is the construction of the vector fields $Y_\pm$ (Proposition~\ref{prop:fields}). We leave it in section~\ref{sec:construction}.

\section{Construction of the vector fields \texorpdfstring{$Y_\pm$}{Y±}}\label{sec:construction}

The goal of this section is to construct the following vector fields.

\begin{proposition}\label{prop:fields}
On each open side $U_\pm$, there is a smooth field $Y_\pm$ such that
\begin{equation}\label{eq:goal}
  Y_\pm\cdot X=0,\qquad \diver Y_\pm\geq1,\qquad
  |Y_\pm(x)|\leq C_\C|x|,
\end{equation}
where $X$ is the normal field defined in \eqref{eq:X}.
\end{proposition}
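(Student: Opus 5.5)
We construct $Y=Y_\pm$ on one side $U=U_\pm$, with leaf $\Sigma=\Sigma_\pm$, normal $\nu$, and Jacobi field $\psi(z)=|z\cdot\nu(z)|$. We work in the coordinates $\Phi(t,z)=tz$. We look for $Y$ tangent to the leaves and homogeneous of degree one: $Y(tz)=t\,W(z)$ for a tangent field $W$ on $\Sigma$. Then $Y\cdot X=0$ holds automatically, and the growth bound $|Y(x)|\le C|x|$ reduces to $|W(z)|\le C|z|$ on $\Sigma$. Note that $|z|\ge1$ on $\Sigma$, since $\dist(0,\Sigma)=1$.

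Next we compute $\diver Y$. Let $\lambda$ be the leaf parameter. On the leaf $t\Sigma$ the normal speed of the foliation is $|\nabla\lambda|^{-1}=\psi(z)$, so $|\nabla\lambda|(tz)=1/\psi(z)$. For $Y$ tangent to the leaves,
\[
\diver Y=\diver_{t\Sigma}Y+\bigl(\nabla_XY\bigr)\cdot X .
\]
The second term equals $-Y\cdot\nabla_XX$. Since the leaves are minimal, $\nabla_XX=-\nabla^{\Sigma}\log|\nabla\lambda|=\nabla^\Sigma\log\psi$ after rescaling. Using degree-one homogeneity, this gives
\[
\diver Y(tz)=\psi(z)^{-1}\diver_\Sigma\bigl(\psi W\bigr)(z).
\]
Hence it suffices to find a tangent field $W$ on $\Sigma$ with
\[
\diver_\Sigma(\psi W)\ge\psi,\qquad |W(z)|\le C|z|.
\]

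We take the projection-kernel construction suggested in the introduction. For $y\in\Sigma$, let $P_z$ be the tangential projection at $z$ and set
\[
V(z)=\int_\Sigma k(z,y)\,P_z(z-y)\,\psi(y)\,d\cH^n(y),
\]
where $k(z,y)$ is a radial cutoff kernel at scale comparable to $|y|$, for instance $k=|y|^{-n}\phi(|z-y|/|y|)$, so that it localizes near $y$. Then $\diver_\Sigma P_z(z-y)=n$ on the minimal leaf, since $\Delta_\Sigma\tfrac12|z-y|^2=n$ by minimality. The error from differentiating $k$ produces a term $\partial_r\phi\,|P_z(z-y)|^2/(|z-y|\,|y|^{n+1})$. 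If $\phi$ is chosen nonincreasing and the term is paired against the radial structure, it has a good sign, much as in the monotonicity formula. The $\psi$-weight term needs $\nabla\psi$, which we handle via the Jacobi equation. The aim is an inequality of the form
\[
\diver_\Sigma V\ge c\,m(z),\qquad m(z)=\int k\,\psi\,d\cH^n(y),
\]
followed by setting $\psi W=V$ and normalizing by a constant, if $m\gtrsim\psi$. Alternatively, set $W=V/m$ and recompute. For this we need two-sided integral control: $\int_{\Sigma\cap B_{r}(z)}\psi$ must be comparable to $\psi(z)r^n$ at scale $r\sim|z|$, and similarly $|V|\le C|z|\,m$.

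These comparabilities come from Wang's weak Harnack inequality for positive Jacobi fields on the leaf, together with his growth estimates, which give $\psi(z)\approx|z|^{-\gamma}$ in an averaged sense. The area bounds $\cH^n(\Sigma\cap B_r(z))\le Cr^n$ follow from minimality and the monotonicity formula.

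The main obstacle is the lower bound for the divergence in terms of $\psi(z)$ pointwise. Weak Harnack only bounds $\psi(z)$ above by local averages, not below, near points close to $\operatorname{Sing}\C$ after rescaling. I would therefore first try to prove $\diver_\Sigma(\psi W)\ge\psi$ with $\psi W$ defined directly as the integral. Here the lower bound would use the averaged $\psi$ in $m$, which dominates $\psi(z)$ by the weak Harnack inequality (the mean value inequality for the subsolution $\psi$). The upper bound $|W|\le C|z|$ would come from the reverse estimate, using the growth estimates to compare averages at comparable scales.

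Finally, we smooth if necessary, extend by dilation, and multiply by a constant to obtain $\diver Y\ge1$.
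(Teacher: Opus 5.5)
Your reduction to a tangent field $W$ on $\Sigma$ with $\diver_\Sigma(\psi W)\ge\psi$ and $|W(z)|\le C|z|$ is correct and is the paper's reduction. One intermediate sign is off: the correct identity is $\nabla_XX=\nabla^{\top}\log|\nabla\lambda|=-\nabla^{\top}\log\psi$ after rescaling, and minimality plays no role in it, but your final formula is right.

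The gap is the divergence lower bound. For $k=|y|^{-n}\phi(s)$ with $s=|z-y|/|y|$,
\[
 \diver_{\Sigma,z}\bigl(k\,(z-y)^{\top_z}\bigr)
 =|y|^{-n}\bigl(n\phi(s)+s\phi'(s)-s\phi'(s)\,\theta\bigr),
 \qquad
 \theta=\frac{|(z-y)^{\perp_z}|^2}{|z-y|^2}.
\]
For nonincreasing $\phi$, the $\phi'$-term you single out is $\le0$, so its sign is bad, not good. Only $-s\phi'\theta$ has a sign. The remaining part $n\phi+s\phi'=s^{1-n}(s^n\phi)'$ is negative somewhere for every compactly supported $\phi\ge0$. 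The monotonicity formula yields positivity only after integrating and differentiating in the scale, never pointwise. For instance, on a flat leaf with constant weight, a fixed-scale kernel gives $V\equiv0$. Integrated against a positive $\psi$ over which you have no upper control, this sign-changing kernel gives no bound $\diver_\Sigma V\ge c\,m$.

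Even granting such a bound, the passage from $m$ to $\psi(z)$ runs the wrong way. $\psi$ is a positive \emph{super}solution, since $\Delta_\Sigma\psi=-|A|^2\psi\le0$. Wang's weak Harnack inequality bounds local averages of $\psi$ by $C\inf\psi\le C\psi(z)$, so $m\lesssim\psi(z)$, not $m\gtrsim\psi(z)$. The reverse inequality is exactly the uniform pointwise control that is unavailable for singular cones. The variant $W=V/m$ does not help either: it introduces the uncontrolled term $V\cdot\nabla_\Sigma(\psi/m)$.

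The missing idea is forced by the computation above. Requiring $(s^n\phi)'\ge0$ for all $s>0$ forces $\phi\ge cs^{-n}$ at infinity, so no localized kernel can work. Moreover, any positive part of $(s^n\phi)'$ at $s>0$ only puts a local average of $\psi$ into the lower bound, which is again the wrong direction. The paper therefore takes $\phi(s)=s^{-n}$ with no cutoff. The kernel $K_p(z)=(z-p)^{\top_z}/|z-p|^n$ has distributional divergence $n\omega_n\delta_p+n|(z-p)^{\perp_z}|^2|z-p|^{-n-2}\ge n\omega_n\delta_p$. Hence $\psi Z:=(n\omega_n)^{-1}\int_\Sigma\psi(p)K_p\,d\cH^n(p)$ satisfies $\diver_\Sigma(\psi Z)\ge\psi$, with the exact value $\psi(z)$ coming from the Dirac mass, so no lower Harnack bound is needed.

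Averages of $\psi$ then enter only the upper bound
\[
|Z(z)|\le(n\omega_n\psi(z))^{-1}\int_\Sigma\psi(p)|z-p|^{1-n}\,d\cH^n(p),
\]
where weak Harnack points the right way. Because this kernel is global, you also need the decay $M(s)\le C(s/r)^{n+\gamma}M(r)$ with $\gamma<-1$, obtained from Wang's Lemma~A.2 by a blow-down compactness argument. This makes the potential converge at infinity and be $\le C|z|\psi(z)$. Note also that the averages of $\psi$ decay like $|z|^{\gamma}$; they do not grow like $|z|^{-\gamma}$.
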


\subsection{Ideas of the construction}\label{sec:kernel}

We fix on one side and write $\Sigma$ for its leaf and
$\psi(z):=|z\cdot\nu(z)|>0$.
We construct a tangent field $Z$ on $\Sigma$ and extend it by
\begin{equation}\label{eq:lift}
 Y(tz)=tZ(z).
\end{equation}
The map $\Phi(t,z)=tz$ has Jacobian $t^n\psi(z)$.
In these coordinates, $Y$ has components $(0,Z)$, so
\begin{equation}\label{eq:lift-divergence}
 (\diver Y)(tz)
 =\frac1{t^n\psi(z)}\diver_\Sigma(t^n\psi Z)(z)
 =\frac1{\psi(z)}\diver_\Sigma(\psi Z)(z).
\end{equation}
Thus it is enough to construct $Z$ with
\begin{equation}\label{eq:leaf-goal}
 Z(z)\in T_z\Sigma,\qquad
 \diver_\Sigma(\psi Z)\geq\psi,\qquad
 |Z(z)|\leq C |z|.
\end{equation}

We first try the tangential projection for intuition
\[
 P_0(x)=x-\langle x,X(x)\rangle X(x).
\]
On $\Sigma$, this gives $Z_0(z)=z^{\top_z}$.
Minimality gives $\diver_\Sigma Z_0=n$, and hence
\[
 \diver P_0(tz)=n+Z_0(z)\cdot\nabla_\Sigma\log\psi(z).
\]
The field is tangent to the leaves and satisfies $|P_0(x)|\leq|x|$. 
But we do not have a pointwise estimate for the last term that gives a uniform positive lower bound for its divergence (in the case where the cone is regular, one can indeed obtain a pointwise estimate for the last term from the asymptotic estimates in \cite{hardt1985area}. See also \cite[Lemma 4.2]{niu2026strict} for a similar approach). 

Instead of pointwise estimates for $\psi$, we construct \(Z\) by integrating projection kernels over \(\Sigma\) with weight \(\psi\). Integral estimates for \(\psi\) then give the required linear growth bound. Hence it avoids the need for uniform pointwise asymptotic estimates for \(\psi\). For $p,z\in\Sigma$ with $p\ne z$, set
\[
 P_p(z)=(z-p)^{\top_z},\qquad
 K_p(z)=\frac{P_p(z)}{|z-p|^n}.
\]
Since $\diver_\Sigma P_p=n$, we have
\begin{equation}\label{eq:kernel-div}
 \diver_\Sigma K_p(z)
 =\frac n{|z-p|^n}
  -\frac{n|(z-p)^{\top_z}|^2}{|z-p|^{n+2}}
 =\frac{n|(z-p)^{\perp_z}|^2}{|z-p|^{n+2}}\geq0.
\end{equation}
The kernel has size $O(|z-p|^{1-n})$, so it is locally integrable.

We integrate the kernels with weight $\psi(p)$ and set
\begin{equation}\label{eq:Z}
 Z(z):=\frac1{n\omega_n\psi(z)}
       \int_\Sigma\psi(p)K_p(z)\,d\cH^n(p).
\end{equation}
For now, assume that
\begin{equation}\label{eq:potential}
 \mathcal P_\psi(z):=
 \int_\Sigma\frac{\psi(p)}{|z-p|^{n-1}}\,d\cH^n(p)
 \leq C |z|\psi(z)
 \qquad(z\in\Sigma).
\end{equation}
We prove this estimate in Subsection~\ref{sec:estimates}.
Since $|K_p(z)|\leq|z-p|^{1-n}$, the integral in \eqref{eq:Z}
converges absolutely. Thus $Z$ is well-defined and tangent to $\Sigma$. In addition, we have the linear growth
\[
 |Z(z)|\leq\frac{\mathcal P_\psi(z)}{n\omega_n\psi(z)}
 \leq C|z|.
\]

\begin{lemma}\label{lem:kernel}
Under the assumption \eqref{eq:potential}, the field $Z$ in \eqref{eq:Z} is smooth.
\end{lemma}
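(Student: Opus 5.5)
We need $Z$ to be smooth on $\Sigma$. Since $\psi$ is smooth and positive on $\Sigma$, it suffices to show that the vector-valued function
\[
 W(z):=\int_\Sigma\psi(p)\,\frac{(z-p)^{\top_z}}{|z-p|^n}\,d\cH^n(p)
\]
is smooth. Write $(z-p)^{\top_z}=\pi(z)(z-p)$, where $\pi(z)$ is orthogonal projection onto $T_z\Sigma$; $\pi$ depends smoothly on $z$. Then $W(z)=\pi(z)V(z)$ with
\[
 V(z)=\int_\Sigma\psi(p)\,\frac{z-p}{|z-p|^n}\,d\cH^n(p),
\]
so it is enough to prove that $V$ is smooth near an arbitrary point $z_0\in\Sigma$. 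Fix $z_0$ and a small radius $r>0$ such that $\Sigma\cap B_{4r}(z_0)$ is a smooth graph over $T_{z_0}\Sigma$. Choose a cutoff $\chi\in C_c^\infty(B_{2r}(z_0))$ with $\chi=1$ on $B_r(z_0)$, and split $V=V_{\rm near}+V_{\rm far}$, using the weights $\chi(p)\psi(p)$ and $(1-\chi(p))\psi(p)$ respectively.

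\emph{Far part.} For $z\in\Sigma\cap B_{r/2}(z_0)$ and $p$ in the support of $1-\chi$, we have $|z-p|\ge r/2$. Hence the integrand and all its $z$-derivatives are bounded by $C_k\,\psi(p)|z-p|^{1-n-k}\le C_{k,r}\,\psi(p)|z-p|^{1-n}$. Moreover, $|z-p|$ is comparable to $|z_0-p|$ uniformly in $z$. So the derivatives are dominated by $C\,\psi(p)|z_0-p|^{1-n}$, which is integrable by \eqref{eq:potential} at $z_0$. This finiteness at the single point $z_0$ is exactly where the assumption is used. Differentiation under the integral sign then shows that $V_{\rm far}$ is $C^\infty$; the argument actually shows it is smooth as a function of $z\in B_{r/2}(z_0)\subset\R^{n+1}$, so its restriction to $\Sigma$ is smooth.

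\emph{Near part.} This is the main point, since the kernel is singular. Parametrize $\Sigma\cap B_{4r}(z_0)$ by a smooth graph map $G:\Omega\subset\R^n\to\Sigma$. Writing $z=G(x)$ and $p=G(y)$,
\[
 V_{\rm near}(G(x))=\int_{\R^n} f(y)\,\frac{G(x)-G(y)}{|G(x)-G(y)|^n}\,dy,
\]
where $f=(\chi\psi\circ G)J_G$ is smooth with compact support. I would handle this with the change of variables $y=x-h$, which turns the integral into
\[
 \int_{\R^n} f(x-h)\,k(x,h)\,dh,\qquad k(x,h)=\frac{G(x)-G(x-h)}{|G(x)-G(x-h)|^n}.
\]
Taylor expansion gives $G(x)-G(x-h)=h\cdot A(x,h)$ with $A(x,h)=\int_0^1 DG(x-sh)\,ds$ smooth. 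Since $G$ is an embedding, $|h\cdot A|\ge c|h|$. Hence $k(x,h)=|h|^{1-n}\,m(x,h,h/|h|)$ with $m$ smooth in $(x,h)$ and in the direction variable. Every $x$-derivative of the integrand is then $O(|h|^{1-n})$, locally integrable in $h$, uniformly for $x$ in compact sets. The domain of $h$ is uniformly bounded because $f$ has compact support. After shrinking so that $x-h$ stays in $\Omega$ on $\operatorname{supp} f(x-\cdot)$, dominated differentiation applies and $V_{\rm near}\circ G$ is $C^\infty$.

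Combining the two parts, $V$ is smooth near every $z_0$. Therefore $W=\pi V$ is smooth, and so is $Z=W/(n\omega_n\psi)$, which proves the lemma.
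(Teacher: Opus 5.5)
Your proof is correct and follows the paper's argument. You use the same cutoff splitting, and you treat the far part the same way, by differentiating under the integral with domination from \eqref{eq:potential} at the single point $z_0$. For the near part you use an Evans-type change of variables that keeps the singularity at a fixed location, as the paper does; the differences are minor. You pull the $p$-independent projection $\pi(z)$ out of the integral, and you use a graph chart with the translation $y=x-h$ (kernel written as $|h|^{1-n}$ times a smooth function of $(x,h,h/|h|)$) where the paper uses geodesic polar coordinates centered at $z$.
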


\begin{proof}
Fix $z_0\in\Sigma$. Choose a smooth cutoff $\chi$ supported in
a small neighborhood of $z_0$, with $\chi=1$ near $z_0$.
For $z$ sufficiently close to $z_0$, write
\[
 \int_\Sigma\psi(p)K_p(z)\,d\cH^n(p)
 =
 \int_\Sigma\chi(p)\psi(p)K_p(z)\,d\cH^n(p)
 +
 \int_\Sigma(1-\chi(p))\psi(p)K_p(z)\,d\cH^n(p).
\]

For the first integral, use geodesic polar coordinates centered
at $z$. We choose the coordinate radius independently of $z$
near $z_0$. We use a change of variables, as in \cite[Section~2.2.1, Theorem~1]{evans2010partial}
for the Newtonian potential. In geodesic polar coordinates centered at $z$, the factor
$r^{1-n}$ in the kernel cancels the factor $r^{n-1}$
in the area element. The remaining integrand is smooth,
so differentiation under the integral gives local smoothness.

For the second integral, the cutoff removes the singularity
near $p=z$. The kernel $K_p(z)$ is smooth. For $z$ in a fixed compact set, all derivatives of order $k$ in $z$ are bounded
at infinity by $C_k|p|^{1-n}$.
These bounds are integrable with weight $\psi(p)$ by
\eqref{eq:potential} at a fixed point of $\Sigma$.
We can therefore differentiate the remaining integral under
the integral sign.
Hence, overall, $Z$ is smooth.
\end{proof}

We next prove
\begin{equation}\label{eq:kernel-size}
 \diver_\Sigma(\psi Z)\geq\psi.
\end{equation}

By \eqref{eq:kernel-div}, we have
\[
 \diver_\Sigma K_p
 =n\omega_n\delta_p
 +\frac{n|(z-p)^{\perp_z}|^2}{|z-p|^{n+2}}\,d\cH^n(z)
 \geq n\omega_n\delta_p
\]
in distributions. Let $\varphi\in C_c^\infty(\Sigma)$ be nonnegative, we have
\[
 -\int_\Sigma K_p(z)\cdot\nabla_\Sigma\varphi(z)\,d\cH^n(z)
 \geq n\omega_n\varphi(p).
\]
By multiplying this inequality by $\psi(p)/(n\omega_n)$
and integrating over $p\in\Sigma$, we obtain
\begin{align*}
 -\int_\Sigma\psi Z\cdot\nabla_\Sigma\varphi\,d\cH^n
 &=-\frac1{n\omega_n}\int_\Sigma\psi(p)
 \left(\int_\Sigma K_p(z)\cdot\nabla_\Sigma\varphi(z)
 \,d\cH^n(z)\right)d\cH^n(p)\\
 &\geq\int_\Sigma\psi(p)\varphi(p)\,d\cH^n(p).
 \end{align*}
Here we can exchange the two integrals because
$|K_p(z)|\leq|z-p|^{1-n}$ and \eqref{eq:potential}.
Since this holds for every nonnegative
$\varphi\in C_c^\infty(\Sigma)$, we have
$\diver_\Sigma(\psi Z)\geq\psi$ in distributions. Since $Z$ and $\psi$ are smooth, \eqref{eq:kernel-size} holds pointwisely.

\bigskip

Now extend $Z$ by \eqref{eq:lift}.
Tangency gives $Y\cdot X=0$, and \eqref{eq:lift-divergence} gives
$\diver Y\geq1$. Also, $|Y(tz)|\leq C |tz|$.
Applying the construction on both sides proves Proposition~\ref{prop:fields}.

\subsection{Wang's estimates}\label{sec:estimates}

We next prove \eqref{eq:potential} using Wang's estimates from \cite{wang2024mean}.
We first prove the following weak Harnack inequality for the positive Jacobi field $\psi$ on $\Sigma$.

\begin{lemma}
For every $1\leq q<n/(n-2)$, there is a constant $C_q$ such that
\begin{equation}\label{eq:Harnack}
 \left(
 \frac1{\cH^n(\Sigma\cap B_s(z))}
 \int_{\Sigma\cap B_s(z)}\psi^q\,d\cH^n
 \right)^{1/q}
 \leq C_q\inf_{\Sigma\cap B_s(z)}\psi
 \leq C_q\psi(z)
\end{equation}
for all $z\in\Sigma$ and $s>0$.
The constant $C_q$ is independent of $z$ and $s$.
Moreover, with $\Theta_\C=\cH^n(\C\cap B_1)$,
\begin{equation}\label{eq:densities}
     \omega_n s^n
 \leq\cH^n(\Sigma\cap B_s(z))
 \leq\Theta_\C s^n.
\end{equation}

\end{lemma}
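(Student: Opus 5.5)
The plan is to prove the density bounds \eqref{eq:densities} first, since they are geometric and feed into the Harnack estimate. We then obtain \eqref{eq:Harnack} from a weak Harnack inequality for positive supersolutions on minimal hypersurfaces, using Wang's estimates to control the potential term uniformly across scales.

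\emph{Density bounds.} Since $\Sigma$ is a smooth properly embedded minimal hypersurface, the monotonicity formula says $s^{-n}\cH^n(\Sigma\cap B_s(z))$ is nondecreasing in $s$.
\begin{itemize}
\item As $s\downarrow0$ this ratio tends to $\omega_n$, since $\Sigma$ is smooth at $z$. This gives the lower bound.
\item As $s\to\infty$ the ratio tends to the density at infinity of $\Sigma$. The blow-downs $t\Sigma$, $t\downarrow0$, converge to $\C$ with multiplicity one, so this limit equals $\cH^n(\C\cap B_1)=\Theta_\C$. This gives the upper bound.
\end{itemize}
In particular $\Sigma$ is Ahlfors regular with uniform constants at every center and scale. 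Together with the Michael--Simon Sobolev inequality, this is the setting in which Moser iteration runs on $\Sigma$.

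\emph{Weak Harnack.} Since $\psi$ is a positive Jacobi field, $\Delta_\Sigma\psi+|A|^2\psi=0$, so $\Delta_\Sigma\psi\le0$ and $\psi$ is a positive superharmonic function on $\Sigma$. For the nonnegative potential $|A|^2$ we drop the term rather than control it. The weak Harnack inequality of De Giorgi--Moser type for positive supersolutions (the Trudinger version) requires only:
\begin{itemize}
\item the Michael--Simon Sobolev inequality, with exponent $n/(n-2)$ coming from dimension $n$;
\item volume doubling, supplied by \eqref{eq:densities}.
\end{itemize}
It gives $\bigl(\fint_{B_s}\psi^q\bigr)^{1/q}\le C\inf_{B_{s/2}}\psi$ for $q<n/(n-2)$. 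This is exactly the stated exponent range.

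The point is that the constant depends only on $n$, $q$ and the density bounds, so it is uniform in $z$ and $s$. The second inequality in \eqref{eq:Harnack} is trivial, since $z\in B_s(z)$.

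\emph{Main obstacle.} The argument does not close on its own, for two reasons.
\begin{itemize}
\item Moser iteration on a submanifold uses extrinsic balls $\Sigma\cap B_s(z)$ with cutoffs of the ambient distance. The Michael--Simon inequality for compactly supported functions on $\Sigma$ handles this, but matching the infimum on $B_{s/2}$ against the average on $B_s$ needs a covering or chaining step.
\item The statement has the infimum over the same ball $B_s$ as the average, so one must pass from $\inf_{B_{s/2}}$ to $\inf_{B_s}$.
\end{itemize}
Both are addressed by applying the estimate at scale $2s$ and using doubling:
\[
\Bigl(\fint_{B_s}\psi^q\Bigr)^{1/q}\le C\Bigl(\fint_{B_{2s}}\psi^q\Bigr)^{1/q}\le C\inf_{B_s}\psi.
\]
This is where I expect to rely on Wang's weak Harnack inequality from \cite{wang2024mean} rather than reprove it. He states precisely such a scale-invariant estimate for positive Jacobi fields on his minimizing leaves, using the uniform Sobolev and density bounds above. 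I would check that his hypotheses (minimizing leaf, density at infinity $\Theta_\C$) hold for our chosen $\Sigma$, which is immediate since $\Sigma$ is one of his leaves.
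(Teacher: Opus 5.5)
Your density argument (monotonicity, with limit $\omega_n$ as $s\downarrow0$ and limit $\Theta_\C$ as $s\to\infty$ from the multiplicity-one blow-down) is the paper's. Passing from $\inf_{B_{s/2}}$ to $\inf_{B_s}$ by doubling is also fine. The gap is in what you say makes $C_q$ uniform.

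A Trudinger-type weak Harnack inequality on extrinsic balls $\Sigma\cap B_s(z)$ does not follow from the Michael--Simon inequality and volume doubling alone. Moser iteration with these two facts handles the negative powers and the small positive powers. The crossover step, the John--Nirenberg estimate for $\log\psi$, needs in addition a Poincar\'e (or relative isoperimetric) inequality on $\Sigma\cap B_s(z)$. This is a real obstruction, as the $n$-dimensional catenoid with $n\geq3$ shows:
\begin{itemize}
\item it satisfies Michael--Simon and $\omega_n s^n\leq\cH^n(\Sigma\cap B_s(z))\leq 2\omega_n s^n$;
\item it carries a bounded positive harmonic function tending to $1$ on one end and to $0$ on the other;
\item for $z$ on the neck, the average of this function over $\Sigma\cap B_s(z)$ tends to $1/2$ while its infimum tends to $0$ as $s\to\infty$.
\end{itemize}
So no constant depending only on $n$, $q$ and the density bounds can work. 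For your leaf, the missing input is area-minimality. $\Sigma$ is a minimizing hypersurface whose blow-down is the multiplicity-one minimizing cone $\C$, and that is what supplies a relative isoperimetric inequality of Bombieri--Giusti type.

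Your fallback cites the same source as the paper, but it skips the step that actually makes the constants independent of $z$ and $s$. The result the paper uses, \cite[Corollary~3.5]{wang2024mean}, is local: it applies to hypersurfaces in $B_4$ satisfying its hypotheses, on balls controlled by a radius constant $\eta$. The paper argues as follows:
\begin{itemize}
\item it uses the multiplicity-one convergence $R^{-1}\Sigma\to\C$ to check that $R^{-1}\Sigma\cap B_4$ satisfies those hypotheses for every large $R$;
\item for given $z$ and $s$, it takes $R$ so large that the center $z/R$ and radius $s/R$ are admissible;
\item it applies the corollary to $\psi(R\,\cdot)$ on $R^{-1}\Sigma$ and scales back.
\end{itemize}
The corollary's constant is the same for all such $R$, and this is why $C_q$ is independent of $z$ and $s$. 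So the hypothesis check is not ``immediate since $\Sigma$ is one of his leaves''. It is this blow-down argument, and the blow-down enters the Harnack inequality, not only the upper density bound.
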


\begin{proof}
Since the leaves $R^{-1}\Sigma$ blow down to $\C$ with multiplicity one, for all large $R>0$, the restrictions of $R^{-1}\Sigma$
to $B_4$ satisfy the assumptions of
\cite[Corollary~3.5]{wang2024mean}.

Let $\eta$ be the radius constant in \cite[Corollary~3.5]{wang2024mean}.
For any $z\in\Sigma$ and $s>0$, choose $R$ sufficiently large, we apply the corollary to $\psi(R\,\cdot)$ on $R^{-1}\Sigma$,
with center $z/R$ and radius $s/R$.
Rescaling gives \eqref{eq:Harnack}.

Finally, because the density $s^{-n}\cH^n(\Sigma\cap B_s(z))$ converges to $\Theta_\C$ for any fixed $z$ as $s\to\infty$, the monotonicity formula gives \eqref{eq:densities}.
\end{proof}
In the case $q=1$, by \eqref{eq:Harnack} and \eqref{eq:densities}, we get
\begin{equation}\label{eq:ball-integral}
 \int_{\Sigma\cap B_s(z)}\psi\,d\cH^n
 \leq C  s^n\psi(z).
\end{equation}

\paragraph{Decay of averages at infinity.}
For $r>1$, write
\[
 M(r)=\int_{\Sigma\cap B_r}\psi\,d\cH^n,\qquad
 I(r)=\frac{M(r)}{\cH^n(\Sigma\cap B_r)},\qquad
 \gamma_n=-\frac{n-2}{2}
 +\sqrt{\frac{(n-2)^2}{4}-(n-1)}.
\]
Since $n\geq7$, we have $\gamma_n<-1$.
Fix $\gamma_n<\gamma<-1$.
We first prove the following iteration lemma.

\begin{lemma}\label{lem:decay}
There is $R_0>1$ such that for $s\geq r\geq R_0$,
\begin{equation}\label{eq:decay}
 M(s)\leq C (s/r)^{n+\gamma}M(r).
\end{equation}
Consequently, $I(s)\leq C (s/r)^\gamma I(r)$.
\end{lemma}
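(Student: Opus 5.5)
We argue by an iteration on dyadic scales, using the blow-down of $\Sigma$ to $\C$ and the Jacobi equation for $\psi$. The basic claim is a one-step decay estimate: there are $\theta\in(0,1/2)$ and $R_0$ such that
\[
 M(r)\leq\theta^{-(n+\gamma)}M(\theta r)
 \qquad(r\geq R_0/\theta).
\]
Iterating this $k$ times, with $s\in[\theta^{-k}r,\theta^{-k-1}r]$, gives $M(s)\leq M(\theta^{-k-1}r)\leq\theta^{-(k+1)(n+\gamma)}M(r)$. That bound is at most $C(s/r)^{n+\gamma}M(r)$, with $C=\theta^{-(n+\gamma)}$; note $n+\gamma>0$ because $\gamma>\gamma_n>-(n-2)$. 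The bound for $I$ then follows from \eqref{eq:densities}, since $\cH^n(\Sigma\cap B_r)$ is comparable to $r^n$.

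To prove the one-step estimate we argue by contradiction and compactness, which is how Wang's growth estimates in \cite{wang2024mean} are organized. I would cite his corresponding growth lemma if it is available in this form. Suppose there are $r_j\to\infty$ with $M(r_j)>\theta^{-(n+\gamma)}M(\theta r_j)$. Rescale $\Sigma_j=r_j^{-1}\Sigma$ and $\psi_j=\psi(r_j\cdot)/I(r_j)$, so that $\psi_j$ has average one on $\Sigma_j\cap B_1$. The weak Harnack inequality \eqref{eq:Harnack}, together with the Harnack inequality on compact subsets of $\operatorname{Reg}\C$ where $\Sigma_j\to\C$ smoothly, gives local bounds for $\psi_j$ in $L^q$ on $B_{1}$ and locally uniform $C^\infty$ bounds away from $\operatorname{Sing}\C$. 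Hence a subsequence converges to a nonnegative Jacobi field $u$ on $\operatorname{Reg}\C\cap B_1$. The mass normalization should pass to the limit, giving $\int_{\C\cap B_1}u=\Theta_\C$, so $u\not\equiv0$. The step that needs care is that no mass concentrates near $\operatorname{Sing}\C$. For this I would use the $L^q$ bound with $q>1$ from \eqref{eq:Harnack} and the fact that small neighbourhoods of $\operatorname{Sing}\C$ have small $\cH^n$-measure on $\Sigma_j$, so that Hölder's inequality controls the mass there.

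Positive Jacobi fields on a minimizing cone, extended as $L^1_{loc}$ supersolutions across the singular set, are controlled by their homogeneous parts. Writing $u$ in terms of the spectrum of $-(\Delta_{\C\cap S^n}+|A|^2)$, positivity forces the slowest mode to dominate near $0$. Its homogeneity is at least $\gamma_n$, using the minimizing bound $\lambda_1\geq-(n-2)^2/4$, with $(n-1)$ accounting for the homogeneity coming from the Jacobi field $x\cdot\nu$. Consequently $\int_{\C\cap B_\theta}u\geq c\,\theta^{n+\gamma'}$ with $\gamma'\leq\gamma_n<\gamma$, up to a constant independent of $\theta$. Passing back to the limit, $M(\theta r_j)/M(r_j)\to\int_{B_\theta}u/\int_{B_1}u\geq c\,\theta^{n+\gamma'}$. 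Choosing $\theta$ small so that $c\,\theta^{n+\gamma'}>\theta^{n+\gamma}$ contradicts the assumed inequality.

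I expect the main obstacle to be this lower bound for $u$ on a singular cone. The spectral decomposition is not directly available there, so I would instead invoke Wang's classification and growth estimate for positive Jacobi fields on general minimizing cones, which is the ingredient \cite{wang2024mean} supplies in place of the regular-case asymptotics.
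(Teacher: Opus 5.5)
Your proposal is correct and follows essentially the paper's argument: a contradiction--compactness one-step decay along the blow-down to $\C$, no concentration of mass near $\operatorname{Sing}\C$ via the $L^q$ weak Harnack bound and H\"older, Wang's growth lemma \cite[Lemma~A.2]{wang2024mean}, and then iteration. Wang's constant is independent of $u$, which is what your ``constant independent of $\theta$'' actually requires, since $u$ depends on $\theta$. Normalizing by $I(r_j)$ rather than $\psi(R_jp_j)$, and iterating $M$ rather than $I$, are cosmetic differences. One detail is missing: Wang's lemma needs a positive Jacobi field on all of $\operatorname{Reg}\C$, not just $u\geq0$, $u\not\equiv0$ on $\operatorname{Reg}\C\cap B_1$. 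This follows from \eqref{eq:Harnack} on larger balls, which with your normalization gives uniform $L^q$ bounds and a uniform positive lower bound for $\psi_j$ on every fixed ball.
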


\begin{proof}
For a positive Jacobi field $u$ on all of $\operatorname{Reg}\C$, set
\[
 I_u(\rho)=\frac1{\cH^n(\C\cap B_\rho)}
 \int_{\C\cap B_\rho}u\,d\cH^n.
\]
This is the $L^1_*$-norm in \cite{wang2024mean}.
By \cite[Lemma~A.2]{wang2024mean}, there is $a>0$, independent of $u$,
such that
\begin{equation}\label{eq:cone-growth}
 I_u(\rho)\geq a\rho^{\gamma_n}I_u(1)
 \qquad(0<\rho<1).
\end{equation}
Choose $0<\tau<1/2$ to be determined. 
We first show that for some small $\tau$,
\begin{equation}\label{eq:one-step}
 I(\tau R)\geq\tau^\gamma I(R)
 \quad\text{for every sufficiently large }R.
\end{equation}

Suppose this fails along $R_j\to\infty$.
Set $\Sigma_j=R_j^{-1}\Sigma$ and choose $p_j\in\Sigma_j$ tending
to a fixed regular point $p_0\in\C$.
Normalize by
\[
 u_j(y)=\frac{\psi(R_jy)}{\psi(R_jp_j)},\qquad u_j(p_j)=1.
\]
Applying \eqref{eq:Harnack} on larger balls containing $p_j$
gives uniform $L^q$ bounds on every fixed ball.
Smooth convergence of the leaves and local elliptic estimates give
a subsequence converging smoothly on compact subsets of
$\operatorname{Reg}\C$ to a nonnegative Jacobi field $u$, with $u(p_0)=1$.
The local Harnack inequality gives a positive lower bound near $p_0$.
Applying \eqref{eq:Harnack} on larger balls then gives a positive
lower bound on every fixed ball. Thus $u>0$ on all of
$\operatorname{Reg}\C$.

We next show that the assumption $ I(\tau R_j)<\tau^\gamma I(R_j)$
implies $I_u(\tau)\leq\tau^\gamma I_u(1)$ for the limiting
Jacobi field $u$ on $\C$. Fix $1<q<n/(n-2)$ and write $\mu_j=\cH^n\llcorner\Sigma_j,\mu=\cH^n\llcorner\C.$
Then by \eqref{eq:Harnack} and the area bound \eqref{eq:densities}, we have 
\[
 \int_{B_2}u_j^q\,d\mu_j\leq C.
\]
On each relatively compact subset of $\operatorname{Reg}\C\cap B_2$,
local smooth convergence of $\Sigma_j$ gives the same bound for the limit. By the exhaustion and monotone convergence theorem,
\[
 \int_{B_2}u^q\,d\mu\leq C.
\]
Thus, by H\"older's inequality, for every measurable $G\subset B_2$,
\[
 \int_Gu_j\,d\mu_j\leq C\mu_j(G)^{1-1/q},
 \qquad
 \int_Gu\,d\mu\leq C\mu(G)^{1-1/q}.
\]

Fix $b\in (0,2)$.
The singular set and $\partial B_b$ have zero $\mu$-measure.
For any $\varepsilon>0$, choose an open set
$V_\varepsilon\Subset B_2$ covering $(\operatorname{Sing}\C\cap\overline{B_b})\cup\partial B_b$ 
such that $\mu(V_\varepsilon)<\varepsilon$ and
$\mu(\partial V_\varepsilon)=0$.
Weak convergence of the area measures then gives
$\mu_j(V_\varepsilon)\to\mu(V_\varepsilon)$.
The estimates above therefore imply
\[
 \int_{V_\varepsilon}u_j\,d\mu_j
 +\int_{V_\varepsilon}u\,d\mu
 \leq C\varepsilon^{1-1/q}
\]
for all sufficiently large $j$.

Outside $V_\varepsilon$, we are away from the singular set.
Local smooth convergence and $\mu(\partial V_\varepsilon)=0$ give
\[
 \int_{B_b\setminus V_\varepsilon}u_j\,d\mu_j
 \longrightarrow
 \int_{B_b\setminus V_\varepsilon}u\,d\mu.
\]
Combining the two parts, we obtain
\[
 \limsup_{j\to\infty}
 \left|
 \int_{B_b}u_j\,d\mu_j-\int_{B_b}u\,d\mu
 \right|
 \leq C\varepsilon^{1-1/q}.
\]
Letting $\varepsilon\downarrow0$ proves convergence of the integrals.
By rescaling, we obtain
\[
 \frac{I(bR_j)}{\psi(R_jp_j)}
 =
 \frac{\int_{B_b}u_j\,d\mu_j}{\mu_j(B_b)}
 \longrightarrow
 \frac{\int_{B_b}u\,d\mu}{\mu(B_b)}
 =I_u(b).
\]
Our assumption $I(\tau R_j)<\tau^\gamma I(R_j)$ then gives
$I_u(\tau)\leq\tau^\gamma I_u(1)$.
But \eqref{eq:cone-growth} implies
\[
 I_u(\tau)\geq a\tau^{\gamma_n}I_u(1).
\]
So as long as we choose $\tau$ small enough so that $a\tau^{\gamma_n}>2\tau^\gamma$, this contradicts $I_u(\tau)\leq\tau^\gamma I_u(1)$. This proves \eqref{eq:one-step}.

Choose $R_0$ large enough so that \eqref{eq:one-step} holds for
$R\geq R_0$. Given $s\geq r\geq R_0$, choose $j\geq0$ with
$t=\tau^js\in(\tau r,r]$.
Iteration gives $I(s)\leq(s/t)^\gamma I(t)$.
The area bounds \eqref{eq:densities} and $M(t)\leq M(r)$ yield
\[
 M(s)\leq C (s/t)^{n+\gamma}M(t)
 \leq C (s/r)^{n+\gamma}M(r).
\]
\end{proof}

\begin{proof}[Proof of \eqref{eq:potential}]
Fix $z\in\Sigma$ and set $a=\max\{R_0,|z|\}$.
For $|p|\leq2a$, we have $|p-z|\leq3a$.
Set $r_j=3a\,2^{-j}$ and apply \eqref{eq:ball-integral}:
\[
 \int_{\Sigma\cap B_{2a}}
 \frac{\psi(p)}{|z-p|^{n-1}}\,d\cH^n(p)
 \leq\sum_{j\geq0}r_{j+1}^{1-n}
 \int_{\Sigma\cap B_{r_j}(z)}\psi\,d\cH^n
 \leq C \psi(z)\sum_{j\geq0}r_j
 \leq C  a\psi(z).
\]
For $|p|>2a$, we have $|p-z|\geq|p|/2$.
Summing over shells centered at the origin and using \eqref{eq:decay},
we obtain
\[
 \int_{\Sigma\setminus B_{2a}}
 \frac{\psi(p)}{|z-p|^{n-1}}\,d\cH^n(p)
 \leq C  a^{1-n}M(a)\sum_{j\geq1}2^{j(1+\gamma)}
 \leq C  a\psi(z).
\]
The series converges because $\gamma<-1$.
Here $M(a)\leq C  a^n\psi(z)$ follows from
$B_a\subset B_{2a}(z)$ and \eqref{eq:ball-integral}.
Finally, $|z|\geq1$ gives $a\leq R_0|z|$.
This proves \eqref{eq:potential}.
\end{proof}

The construction in Subsection~\ref{sec:kernel} now proves
Proposition~\ref{prop:fields}.

\section*{Acknowledgments}

I would like to thank Jacob Bernstein for his questions and suggestions during my
visit to Johns Hopkins University. He suggested projecting the position
vector onto the tangent spaces of the foliation leaves. This idea was
the starting point for the vector field construction in this paper. I would also like to thank Zhihan Wang for proposing this question to me and for his explanation of his previous work.

\section*{Disclosure of AI use}

I developed the proof with assistance from AI (OpenAI's GPT-5.6 and GPT-6.0). I used these models to explore proof ideas, work out details,
check arguments, and revise the English. I formulated the problem,
identified the relevant literature, and chose the proof strategies.
I take responsibility for the mathematical claims and the final text.

\paragraph{Development of the proof.}
My initial approach followed the PDE method in my earlier work \cite{niu2026strict}
on regular minimizing hypercones. I tried to construct the vector
fields by solving a PDE on a foliation leaf. I discussed this
approach with AI several times, but these attempts
did not lead to a proof. The approach seemed to require uniform
pointwise asymptotic estimates for the positive Jacobi field on
the whole leaf. I could not establish these estimates for general
minimizing hypercones.

During my visit to Johns Hopkins University, Jacob Bernstein
suggested projecting the position vector onto the tangent spaces
of the foliation leaves. This field is tangent to the leaves and
has at most linear growth. His suggestion led me to reconsider
the divergence condition. I had focused on obtaining the exact
identity $\diver Y=1$. I then realized that the proof only needs
a uniform positive lower bound for $\diver Y$, together with
tangency and linear growth. In the regular case, Bernstein's
suggestion gives a simpler construction after a minor modification.

I then explored whether the same idea could work for general
minimizing hypercones. The original projection field still
requires pointwise control of the Jacobi field to estimate its
divergence. I discussed this difficulty and the weaker divergence
condition with AI. In these discussions, AI suggested integrating
translated projection kernels with a weight. I developed this
construction through further discussions with AI, using the
positive Jacobi field as the weight.

The change in the divergence condition was important in this process. Bernstein's suggestion led me to drop the exact identity
and explore a different construction. The later discussions with
AI then led to a weighted integral construction that avoids the
pointwise asymptotic estimates needed in the earlier PDE approach.

{\small
\let\originalthebibliography\thebibliography
\renewcommand{\thebibliography}[1]{%
  \originalthebibliography{#1}%
  \setlength{\itemsep}{0pt}%
  \setlength{\parsep}{0pt}%
  \setlength{\parskip}{0pt}%
}
\bibliographystyle{alpha}
\bibliography{main}
}

\bigskip
\begingroup
\small
\normalfont
\noindent
Department of Mathematics, University of Rochester\\
Rochester, New York\\
Email address:
\href{mailto:gniu3@ur.rochester.edu}
     {\texttt{gniu3@ur.rochester.edu}}
\par
\endgroup
\end{document}